\newtheorem{theorem}{Theorem}[section]
\begin{document}

\noindent

\title[Moving Frames and  Graph Canonization  ]{Discrete Moving Frames, Semi-Algebraic Invariants  and the Graph Canonization Problem}

\author{Leonid Bedratyuk}

\begin{abstract}
This paper develops an invariant--geometric interpretation of the canonization problem for simple undirected weighted graphs based on the {discrete moving frame method} for finite groups. We consider the action of the {pair group} $S_n^{(2)}$ on the space of edge weights of a graph. It is emphasized that the classical algebraic approach aimed at describing the ring of polynomial invariants of this action quickly becomes computationally impractical due to the explosive growth in the number and degrees of generators.
The main result is a formalization of a canonical labeling of a graph as a {discrete moving frame} in the sense of Olver: a discrete orbit cross-section is fixed, in particular by a lexicographic rule, and for each configuration of edge weights one defines a permutation in $S_n^{(2)}$ that maps it to its canonical representative. The coordinates of the canonical representative are interpreted as a {complete system of invariants} for the action of $S_n^{(2)}$ that separates orbits, i.e., isomorphism classes of graphs.
It is shown that the invariants obtained via such orbit canonization are of a non-algebraic nature and belong to the class of {semi-algebraic functions}.
We do not propose a new computational algorithm; instead, we provide a rigorous theoretical foundation for the very concept of canonization by viewing it as a process of constructing a discrete moving frame and the corresponding system of semi-algebraic invariants.

\vspace{0.5cm}
\noindent \textbf{Keywords:} graph canonization, graph isomorphism, moving frame, invariant theory, semi-algebraic functions, pair group $S_n^{(2)}$
\end{abstract}

\maketitle

\section{Introduction}

The graph canonization problem is one of the central problems in graph theory and algorithmic combinatorics. Its goal is to construct a canonical representative of the isomorphism class of a graph---an object that is invariant for all graphs isomorphic to each other and unique for each class of non-isomorphic graphs \cite{Baba1, Baba2, SW}. Canonization is closely related to the graph isomorphism problem: an efficient canonization algorithm automatically solves the isomorphism problem, and conversely \cite{Gur, KV}. In practice, canonization is implemented via sophisticated combinatorial procedures, in particular via individualization and partition refinement, as in the classical \texttt{nauty} and \texttt{Traces} algorithms, which are currently standard tools in this area \cite{McKayPiperno2014, CodishEtAl2016}. Despite substantial applied success, the structural and conceptual interpretation of canonization remains a subject of active theoretical research.

Attempts to construct a complete graph invariant by the classical algebraic route---via describing the ring of polynomial invariants of the action of the pair group $S_n^{(2)}$ on the edge variables---have proved computationally inefficient. The rapid growth in the number and degrees of such polynomials leads to a combinatorial explosion, which makes this approach practically inapplicable even for graphs with a small number of vertices. In fact, complete systems of algebraic invariants have been explicitly described only for graphs with at most five vertices \cite{NT, Bbed}. In contrast, the use of \emph{semi-algebraic functions} (see \cite{BC}) for expressing invariants has turned out to be much more promising and has made it possible in this paper to construct a complete system of invariants for the action of the group $S_n^{(2)}$. This approach is conceptually natural, since it directly appeals to the procedure of orbit canonization, thereby establishing a fundamental link between invariant theory and algorithmic practice.

The transition from classical polynomial invariants to the broader class of semi-algebraic functions is dictated not only by the computational complexity of the ring of polynomial invariants of the group $S_n^{(2)}$, but also by fundamental limitations of purely polynomial descriptions in orbit separation problems. As shown in~\cite{Fijalkow2019}, for many group actions purely algebraic invariants are insufficient for a complete classification of configurations, whereas the semi-algebraic apparatus makes it possible to build complete systems of invariants. This thesis is further supported by the results of~\cite{Liu2011}, which indicate that using only polynomial equalities often does not yield a sufficiently tight description of invariant sets, forcing one to employ inequalities in order to model complex states precisely. From a practical perspective, as demonstrated in~\cite{Bagnara2005}, it is precisely semi-algebraic invariants that make it possible to capture properties of systems governed by logical guards, which in our context corresponds to the lexicographic comparison procedure. Thus, algorithmic selection using semi-algebraic functions makes it possible to overcome the ``algebraic dead end'' and to formulate a constructive isomorphism criterion even in cases where classical methods do not provide effective orbit separation.

The aim of this work is to propose a unified interpretation of the canonization problem for simple undirected weighted graphs in terms of {semi-algebraic} invariants of the action of the pair-symmetric group
$$
S_n^{(2)}: \quad \boldsymbol{e}_{ij} \longmapsto \boldsymbol{e}_{\pi(i)\pi(j)}, \qquad \pi \in S_n, \ i < j,
$$
on the space of edge weights $\boldsymbol{e}_{ij}$ of a graph on $n$ vertices. The key idea is that graph canonization can be viewed as the construction of a \emph{discrete  moving frame} in the sense of Olver's theory for discrete group actions \cite{Olver2023}. To this end, one fixes a discrete orbit cross-section, for example via the lexicographic minimum, and for each configuration $\boldsymbol x$ one chooses a permutation $\rho(\boldsymbol x) \in S_n^{(2)}$ that maps $\boldsymbol x$ into this cross-section.
Under this approach, the invariants become the coordinates of the canonical representative of the orbit, which corresponds to the canonical vector of edge weights or the canonical adjacency matrix.
The resulting invariant functions are {algorithmic} in nature and have a piecewise-analytic structure, which is a characteristic feature of {semi-algebraic functions}. They are defined via minimization and comparison operations over the finite set of orbit images, and therefore typically do not belong to the classical ring of polynomial invariants. In this sense, canonization generates a fundamental class of semi-algebraic invariants that form a complete graph invariant: two edge-weight configurations belong to the same orbit of the action of $S_n^{(2)}$ if and only if their invariantized coordinates coincide completely.

The structure of the paper is as follows.
In Section~1, for completeness, we recall the classical algebraic approach to graph invariants, describe standard computational constructions, and briefly summarize known results that delineate the possibilities and limitations of this approach.
In Section~2, we present the basics of the discrete moving frame method for finite groups: we define a moving frame, orbit normalization, and invariantization, and we prove that the invariantized coordinates of the canonical representative form a complete system of $G$-invariants.
In Section~3, we apply this apparatus to the action of the pair group $S_n^{(2)}$ on the space of edge weights: we construct a canonical orbit cross-section via the lexicographic minimum, define the corresponding moving frame $\rho(\boldsymbol{x})$, and obtain a complete system of invariants as the coordinates of $\rho(\boldsymbol{x})\cdot\boldsymbol{x}$. We also establish the semi-algebraic nature of the constructed invariants and provide a detailed example for $n=4$.
In the Conclusions, we summarize the results and discuss an interpretation of classical canonical labeling algorithms as efficient implementations of a discrete moving frame.

\section{The Classical Algebraic Approach to Polynomial Graph Invariants}

In this section, for completeness, we recall the classical algebraic approach to graph invariants. Within this framework, invariants are understood as polynomial functions of the edge coordinates that remain unchanged under relabeling of vertices. We give the definition of the algebra of polynomial invariants of a graph, describe standard computational constructions, and briefly summarize known results that delineate the possibilities and limitations of this approach.

\subsection{The group $S_n^{(2)}$ and its action on a graph}

Consider the set of two-element subsets
$$
E_n=\bigl\{\{i,j\}\subset[n],\quad  \bigm|\ 1\le i<j\le n\bigr\},\quad [n]=\{1,2,\dots,n\},  \quad  n>2,
\qquad |E_n|=\binom{n}{2}.
$$
Each permutation $\pi\in S_n$ of the set $[n]$ induces a permutation of the set $E_n$ by the rule
$$
\{i,j\}\longmapsto \{\pi(i),\pi(j)\}.
$$
This gives a homomorphism
$$
\varphi:S_n\longrightarrow S_{E_n}\cong S_{\binom{n}{2}},
$$
whose image is called the \textit{pair group} $S_n^{(2)}$:
$$
S_n^{(2)}=\varphi(S_n)\subset S_{\binom{n}{2}}.
$$
The groups $S_n^{(2)}$ and $S_n$ are isomorphic as abstract groups, but they are realized differently:
$S_n$ acts on vertices, whereas $S_n^{(2)}$ acts on edges. Intuitively, $S_n^{(2)}$ is the group of all permutations of edges that arise
solely from a relabeling of vertices; see also \cite[Chapter 4]{H-P}.

Denote by $\mathcal{V}_n$ the vector space of all undirected graphs on the vertex set
$[n]$ whose edges are weighted by real numbers.
For each edge $\{i,j\}\in[n]^{(2)}$ denote by $\boldsymbol e_{ij}$
the simple graph having the single edge $\{i,j\}$ of weight $1$, and let $\omega_{i,j}\boldsymbol{e}_{\{i,j\}}$ be the graph with the single edge $\{i,j\}$ of weight $\omega_{ij} \in \mathbb{R}$.
Then $\mathcal{V}_n$ is a vector space of dimension $m=\binom{n}{2}$ with basis
$$
\langle \boldsymbol e_{1\,2},\boldsymbol e_{1\,3},\ldots,\boldsymbol e_{n-1\,n}\rangle.
$$

Let now $\mathcal{V}_n^*$ be the dual space of $\mathcal{V}_n$. Its basis consists of the linear functions
$x_{ij}:\mathcal{V}_n\to\mathbb{R}$ that select the weight of the corresponding edge:
$$
x_{ij}\bigl(\boldsymbol e_{\{k,l\}}\bigr)=\delta_{ik}\delta_{jl},
\qquad 1\le i<j\le n,\ 1\le k<l\le n.
$$

We define the actions of the groups $S_n$ and $S_n^{(2)}$ on the edge coordinates.
A vertex relabeling $\sigma\in S_n$ acts on the basis edges by
$$
\sigma\cdot \boldsymbol e_{\{i,j\}}=\boldsymbol e_{\{\sigma(i),\sigma(j)\}}.
$$
The induced action on the coordinate functions (with ordered indices) is given in the standard way by
$$
\sigma \cdot x_{i\,j} = x_{\sigma^{-1}(i)\,\sigma^{-1}(j)}.
$$
This action, when restricted to an appropriately ordered set of coordinates $\{x_{i\,j}\}$, coincides with the action of the pair group
$S_n^{(2)}$ as a subgroup of $S_m$, since it permutes
the coordinates consistently with the relabeling of vertices.

\subsection{Polynomial invariants of the group $S_n^{(2)}$}

Let us briefly explain the classical approach to invariants of the group $S_n^{(2)}$.
For convenience, we introduce a new set of variables:
$$
\{x_1,x_2,\ldots,x_m \}=\{ {x}_{1\,2}, {x}_{1\,3}, \ldots, {x}_{n-1\,n}\}.
$$
Then the action of $S_n$ on the set $\{{x}_{1\,2}, {x}_{1\,3}, \ldots, {x}_{n-1\,n}\}$ induces the action of the pair group $S^{(2)}_n$ on the set $\{x_1,x_2,\ldots,x_m \}$.

We extend the action of the group $S_n^{(2)}$ to the algebra of polynomial functions
$\mathbb{K}[\mathcal{V}_n]=\mathbb{K}[x_1, x_2, \ldots, x_m]$.
Polynomial functions that remain unchanged under this action are called invariants of the pair group and form the invariant algebra
$\mathbb{K}[\mathcal{V}_n]^{S_n^{(2)}}$, which is called the invariant algebra of the vector space of simple undirected weighted graphs on $n$ vertices.
To compute invariants one uses the classical Reynolds operator, averaging over the orbit,
$$
R(f)=\frac{1}{|S_n^{(2)}|}\sum_{g\in S_n^{(2)}} (g\cdot f), \quad  f \in \mathbb{R}[\mathcal{V}_n],
$$
which is a projector $\mathbb{R}[\mathcal{V}_n] \to \mathbb{R}[\mathcal{V}_n]^{S_n^{(2)}}$.

\medskip
Consider the example of a simple graph on four vertices.
Then $m=6$, and we have the following relabeling of variables
$$
\{ x_1,\ldots,x_6\}=(x_{12},x_{13},x_{14},x_{23},x_{24},x_{34}).
$$
The invariant algebra $\mathbb{K}[\mathcal{V}_4]^{S_4^{(2)}}$ is well known
(see, for example, \cite{HST, NT}).

A minimal generating set for $\mathbb{K}[\mathcal{V}_4]^{S_4^{(2)}}$
can be chosen, for instance, as the following $9$ orbit averages:
\begin{gather*}
R(x_1)=\frac{1}{6}(x_1+x_2+x_3+x_4+x_5+x_6),\qquad
R(x_1^2)=\frac{1}{6}(x_1^2+\cdots+x_6^2),\\
R(x_1x_6)=\frac{1}{3}(x_1x_6+x_2x_5+x_3x_4),\qquad
R(x_1^3)=\frac{1}{6}(x_1^3+\cdots+x_6^3),\\
R(x_1x_2x_3)=\frac{1}{4}(x_1x_2x_3+x_1x_4x_5+x_2x_4x_6+x_3x_5x_6),\\
R(x_1^4)=\frac{1}{6}(x_1^4+\cdots+x_6^4),\qquad
R(x_1^5)=\frac{1}{6}(x_1^5+x_2^5+x_3^5+x_4^5+x_5^5+x_6^5),
\end{gather*}
as well as orbit sums of mixed monomials of degrees $3$ and $4$ of the type $R(x_1^2x_2)$ and
$R(x_1^3x_2)$; we omit their expanded formulas since they are cumbersome.
If one considers only simple graphs, then their invariant algebra
is generated by four invariants, see~\cite{Bbed},
$$
R(x_1),\qquad R(x_1x_6),\qquad R(x_1x_2),\qquad R(x_1x_2x_3).
$$

The generating system above describes the invariant algebra as an algebraic object; however, it readily yields a \emph{complete
invariant}
$$
\bigl(R(x_1),\,R(x_1x_6),\,R(x_1x_2),\,R(x_1x_2x_3)\bigr),
$$
whose values separate all
$11$ isomorphism types of simple graphs on $4$ vertices.

The invariant algebra $\mathbb{K}[\mathcal{V}_n]^{S_n^{(2)}}$ is known only for small $n$.
The algebra for $n=5$ has a minimal generating set consisting of $56$ polynomials of degree at most $9$, see \cite{NT}.
For $n\ge 6$ a complete description of the invariant algebra remains out of reach even for simple graphs:
the number of monomial orbits and the degrees of invariants increase sharply.
Therefore, constructing a complete graph invariant solely via polynomial invariants for large
$n$ is not feasible in practice in the general case.
In the following sections we propose a fundamentally different approach: we construct a complete graph invariant at the cost of passing to {semi-algebraic} functions, and we show that such a construction is essentially equivalent to the problem of {orbit canonization}.

\section{The Moving Frame Method for Finite Groups}

The modern concept of the moving frame method was formulated in \cite{Olver1998, Olver1999-1} as a universal mechanism for constructing invariants of smooth Lie group actions. In the classical differential--geometric context, the method is based on the existence of an equivariant map that sends an arbitrary point of the space to a заранее chosen orbit cross-section. This makes it possible, by solving normalization equations, to canonically eliminate the group parameters from the object. More recently, it was shown in \cite{Olver2023} that this geometric construction can be effectively adapted to finite and discrete groups. The key feature of a discrete moving frame is the replacement of smooth normalization by a combinatorial--algorithmic procedure---a deterministic choice of a unique orbit representative, which in effect identifies the construction of a moving frame with the canonization of the object. The fundamental invariants obtained in this way are piecewise-analytic (in particular, semi-algebraic) functions that form a complete system of invariants. Below we briefly present the theoretical foundations of this method for subsequent application to the graph isomorphism problem.

\subsection{Discrete moving frames}

Let a finite group $G$ act on a manifold $\mathcal M$:
$$
G\times \mathcal M\to \mathcal M,\qquad (g,x)\mapsto g\cdot x.
$$
A \textit{(right) moving frame} for this action is a map
$\rho:\mathcal M\to G$ satisfying the equivariance condition
\begin{equation}\label{equiv}
\rho(g\cdot x)=\rho(x)\,g^{-1},\qquad \forall\,g\in G,\ \forall\,x\in\mathcal M.
\end{equation}
This definition coincides with the definition in the smooth case \cite{Olver1999-1},
but now $\rho$ is no longer required
to be a smooth or continuous function.

As in the continuous case, a moving frame defines a \textit{normalization (canonization)} of a point on the
orbit:
\begin{equation*}
x^*=\rho(x)\cdot x\in \mathcal M.
\end{equation*}
It follows immediately from \eqref{equiv} that the canonized point depends
only on the orbit, that is,
for any $x\in\mathcal M$ and $g\in G$ we have
$$
(\,g\cdot x\,)^* \;=\; x^*.
$$
Indeed, by definition and equivariance \eqref{equiv} we obtain
$$
(\,g\cdot x\,)^*=\rho(g\cdot x)\cdot(g\cdot x)
=\bigl(\rho(x)g^{-1}\bigr)\cdot(g\cdot x)=\rho(x)\cdot x=x^*.
$$
Thus, in the finite case the moving frame realizes the same fundamental idea as in the continuous case:
all points of a single orbit are mapped to one and the same canonical representative $x^*$.

For the practical computation of invariants one introduces the operation of
\textit{invariantization} of functions.
Let $\mathcal{F}(\mathcal M)$ be some class of functions on $\mathcal M$. The action of $G$ on $\mathcal M$ induces an action on
functions by the rule
$$
(g\cdot F)(x)=F(g^{-1}\cdot x).
$$
Denote the set of $G$-invariants by
$$
\mathcal{F}(\mathcal M)^G=\{F\in\mathcal{F}(\mathcal M)\mid g\cdot F=F\ \ \forall g\in G\}.
$$

The \textit{invariantization} of a function $F\in\mathcal{F}(\mathcal M)$ with respect to a moving
frame $\rho$ is the function $\iota(F)$ defined by
\begin{equation*}
\iota(F)(x)=F\bigl(\rho(x)\cdot x\bigr)=F(x^*).
\end{equation*}
Invariantization is a projector onto invariants, $\iota:\mathcal{F}(\mathcal M)\to\mathcal{F}(\mathcal M)^G$.
In this sense, invariantization for finite groups is a conceptual analogue of the Reynolds operator, except that instead of discrete averaging over the group it performs orbit canonization.

In the smooth case, a cross-section $\mathcal K\subset\mathcal M$ is specified by normalization equations
for the Lie group parameters, and the moving frame is obtained by solving these equations
\cite{Olver1999-1}. For a finite group, the most natural choice is an {algorithmic}
cross-section defined by a rule that selects a canonical representative of each orbit.

Fix a total order $\prec$ on $\mathcal M$, for example,
the lexicographic order on $\mathbb{R}^m$ after fixing coordinates.
For each $x\in\mathcal M$ define the \textit{canonical orbit representative}
as the minimum over the orbit:
\begin{equation}\label{canon1}
\mathrm{can}(x)=\min\nolimits_{\prec}\{\,g\cdot x\mid g\in G\,\}.
\end{equation}
Then the cross-section can be formally written as
$$
\mathcal K=\{\,x\in\mathcal M\mid x=\mathrm{can}(x)\,\}.
$$
If the minimum in \eqref{canon1} is attained at a unique element of the
orbit (i.e., there is no ``tie-break''), then the moving frame is uniquely determined:
$$
\rho(x)=\text{the unique }g\in G\text{ such that }g\cdot x=\mathrm{can}(x).
$$
In the degenerate case when several minimal elements exist, one specifies a deterministic
tie-breaking rule. After that, $\rho$ becomes
totally defined, and the equivariance \eqref{equiv} holds
automatically, since the canonical representative does not change when moving along
the orbit.
\subsection{Completeness of the invariantized coordinates}

Let $\boldsymbol x=(x_1,\ldots,x_m)$ be local coordinates of a point $x\in\mathcal M$, where $m=\dim\mathcal M.$ Define the
invariantized coordinates by
$$
I_s(\boldsymbol x)=\iota(x_s)(\boldsymbol x)=x_s\bigl(\rho(x)\cdot x\bigr),\qquad s=1,\ldots,m,
$$
that is, the coordinates of the canonical representative $\boldsymbol x^*=\rho(\boldsymbol x)\cdot \boldsymbol x$.

Below we formulate a standard fact of the moving frame method: the invariantized
coordinates form a complete system of invariants in the corresponding class of functions.
In the smooth case, an analogous statement is contained, in particular, in
\cite[Theorem~4.5]{Olver1999-1}; in the finite situation the proof reduces to a purely group-theoretic verification.

\begin{theorem}\label{complete_invariants}
Let a finite group $G$ act on a manifold $\mathcal M$, and let
$\mathcal K\subset\mathcal M$ be a cross-section, while $\mathcal M_{\mathrm{reg}}\subset\mathcal M$ is a subset on which each $G$-orbit
intersects $\mathcal K$ {in exactly one point}. Let
$\rho:\mathcal M_{\mathrm{reg}}\to G$ be a discrete moving frame, i.e.,
$$
\rho(g\cdot \boldsymbol x)=\rho(\boldsymbol x)\,g^{-1},
\qquad g\in G,\ \boldsymbol x\in\mathcal M_{\mathrm{reg}}.
$$
Consider the map
$$
\Phi:\mathcal M_{\mathrm{reg}}\to\mathcal K,\qquad
\Phi(\boldsymbol x)=\rho(\boldsymbol x)\cdot \boldsymbol x.
$$
Then:
\begin{itemize}
\item[$(i)$]
If in local coordinates
$\Phi(\boldsymbol x)=(I_1(\boldsymbol x),\ldots,I_m(\boldsymbol x))$,
then $I_1,\ldots,I_m$ are $G$-invariants on $\mathcal M_{\mathrm{reg}}$ and form a
\emph{complete system of invariants} of the group $G$, i.e., they separate $G$-orbits on $\mathcal M_{\mathrm{reg}}$.

\item[$(ii)$]
For any $G$-invariant $F:\mathcal M_{\mathrm{reg}}\to \mathbb R$
there exists a function $\widetilde F:\Phi(\mathcal M_{\mathrm{reg}})\to \mathbb R$ such that
$$
F(\boldsymbol x)=\widetilde F\bigl(\Phi(\boldsymbol x)\bigr)
=\widetilde F\bigl(I_1(\boldsymbol x),\ldots,I_m(\boldsymbol x)\bigr),
\qquad \boldsymbol x\in\mathcal M_{\mathrm{reg}}.
$$
\end{itemize}
\end{theorem}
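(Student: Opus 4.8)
The plan is to treat Theorem~\ref{complete_invariants} as a purely group-theoretic consequence of the equivariance relation $\rho(g\cdot\boldsymbol x)=\rho(\boldsymbol x)g^{-1}$ together with the uniqueness of the intersection $G\cdot\boldsymbol x\cap\mathcal K$ on $\mathcal M_{\mathrm{reg}}$; no analytic input is needed. First I would verify that $\Phi$ is constant on $G$-orbits, which is the same short computation already carried out in the excerpt for $x^{*}$: for $g\in G$ and $\boldsymbol x\in\mathcal M_{\mathrm{reg}}$,
\begin{equation*}
\Phi(g\cdot\boldsymbol x)=\rho(g\cdot\boldsymbol x)\cdot(g\cdot\boldsymbol x)=\bigl(\rho(\boldsymbol x)g^{-1}\bigr)\cdot(g\cdot\boldsymbol x)=\rho(\boldsymbol x)\cdot\boldsymbol x=\Phi(\boldsymbol x).
\end{equation*}
Reading off the $s$-th coordinate function gives $I_s(g\cdot\boldsymbol x)=I_s(\boldsymbol x)$, so each $I_s$ is a $G$-invariant on $\mathcal M_{\mathrm{reg}}$. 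I would also record the two structural facts that do the real work: (a) $\Phi(\boldsymbol x)=\rho(\boldsymbol x)\cdot\boldsymbol x$ lies in the orbit $G\cdot\boldsymbol x$; and (b) $\Phi(\boldsymbol x)\in\mathcal K$ by hypothesis. Together with the regularity assumption, these say that $\Phi(\boldsymbol x)$ is precisely the unique point of $G\cdot\boldsymbol x\cap\mathcal K$.

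For the orbit-separation half of $(i)$, suppose $\boldsymbol x,\boldsymbol y\in\mathcal M_{\mathrm{reg}}$ satisfy $I_s(\boldsymbol x)=I_s(\boldsymbol y)$ for all $s=1,\dots,m$. Since the $I_s$ are exactly the coordinates of $\Phi$, this forces $\Phi(\boldsymbol x)=\Phi(\boldsymbol y)$ as a point of $\mathcal M$. By fact (a) this common point lies simultaneously in $G\cdot\boldsymbol x$ and in $G\cdot\boldsymbol y$, hence the two orbits coincide; thus $\boldsymbol x$ and $\boldsymbol y$ are $G$-equivalent. The converse direction---equal orbits force equal invariant tuples---is just the orbit-invariance of $\Phi$ established above. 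This proves $(i)$.

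For $(ii)$ I would define $\widetilde F$ on $\Phi(\mathcal M_{\mathrm{reg}})$ by choosing, for each $\boldsymbol z$ in the image, any preimage $\boldsymbol x$ with $\Phi(\boldsymbol x)=\boldsymbol z$ and setting $\widetilde F(\boldsymbol z)=F(\boldsymbol x)$. Well-definedness is immediate from $(i)$: two preimages of the same $\boldsymbol z$ lie in one $G$-orbit, and $F$, being $G$-invariant, takes the same value on them. Then for every $\boldsymbol x\in\mathcal M_{\mathrm{reg}}$ we have $\widetilde F(\Phi(\boldsymbol x))=F(\boldsymbol x)$ by construction, and writing $\Phi(\boldsymbol x)$ in coordinates yields $F(\boldsymbol x)=\widetilde F\bigl(I_1(\boldsymbol x),\dots,I_m(\boldsymbol x)\bigr)$, as claimed.

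I expect no serious obstacle here: the whole argument is bookkeeping with the equivariance identity. The one point demanding a little care is the logical role of the regularity set---one must genuinely use that every orbit meets $\mathcal K$ in a \emph{single} point (not merely that $\Phi$ lands in $\mathcal K$) to conclude that $\Phi$ descends to an injection on the orbit space, and one should note at the outset that $\mathcal M_{\mathrm{reg}}$ is a union of full $G$-orbits, so that the statements ``$\Phi(\boldsymbol x)\in G\cdot\boldsymbol x$'' and the $G$-invariance of $F$ are meaningful on all of $\mathcal M_{\mathrm{reg}}$.
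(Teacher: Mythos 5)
Your proposal is correct and follows essentially the same route as the paper: the equivariance identity gives orbit-invariance of $\Phi$, the fact that $\Phi(\boldsymbol x)=\rho(\boldsymbol x)\cdot\boldsymbol x$ lies in $G\cdot\boldsymbol x$ gives separation (the paper just makes the group element $v^{-1}u$ explicit where you argue via the shared point of the two orbits), and part $(ii)$ is the same factorization through $\Phi$, with your $\widetilde F$ defined via preimages rather than as the restriction of $F$ to the image --- an immaterial difference.
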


\begin{proof}
$(i)$ We first prove that $\Phi$ is a $G$-invariant map.
Let $g\in G$ and $\boldsymbol x\in\mathcal M_{\mathrm{reg}}$.
By the right equivariance of the moving frame we have
$$
\rho(g\cdot \boldsymbol x)=\rho(\boldsymbol x)\,g^{-1}.
$$
Then
$$
\Phi(g\cdot \boldsymbol x)
=\rho(g\cdot \boldsymbol x)\cdot (g\cdot \boldsymbol x)
=\bigl(\rho(\boldsymbol x)g^{-1}\bigr)\cdot (g\cdot \boldsymbol x)
=\rho(\boldsymbol x)\cdot \boldsymbol x
=\Phi(\boldsymbol x).
$$
Hence, $\Phi$ is constant on each $G$-orbit in $\mathcal M_{\mathrm{reg}}$.

Suppose that in local coordinates on $\mathcal K$ we have
$$
\Phi(\boldsymbol x)=(I_1(\boldsymbol x),\ldots,I_m(\boldsymbol x)).
$$
Since $\Phi(g\cdot \boldsymbol x)=\Phi(\boldsymbol x)$ for all $g\in G$, it follows that
$$
(I_1(g\cdot \boldsymbol x),\ldots,I_m(g\cdot \boldsymbol x))
=(I_1(\boldsymbol x),\ldots,I_m(\boldsymbol x)).
$$
Therefore, for each $k=1,\ldots,m$ we have
$$
I_k(g\cdot \boldsymbol x)=I_k(\boldsymbol x),
$$
so each $I_k$ is a $G$-invariant on $\mathcal M_{\mathrm{reg}}$.

We now prove completeness of the system $(I_1,\ldots,I_m)$.
We show the equivalence
$$
\boldsymbol y\in G\cdot \boldsymbol x
\quad\Longleftrightarrow\quad
(I_1(\boldsymbol y),\ldots,I_m(\boldsymbol y))=(I_1(\boldsymbol x),\ldots,I_m(\boldsymbol x)).
$$

\smallskip
{($\Rightarrow$)} If $\boldsymbol y=g\cdot \boldsymbol x$ for some $g\in G$,
then by $G$-invariance of $\Phi$ we have $\Phi(\boldsymbol y)=\Phi(\boldsymbol x)$,
and hence their coordinates on $\mathcal K$ coincide:
$$
(I_1(\boldsymbol y),\ldots,I_m(\boldsymbol y))=(I_1(\boldsymbol x),\ldots,I_m(\boldsymbol x)).
$$

\smallskip
{($\Leftarrow$)} Conversely, suppose that
$$
(I_1(\boldsymbol y),\ldots,I_m(\boldsymbol y))=(I_1(\boldsymbol x),\ldots,I_m(\boldsymbol x)).
$$
Then, by definition of the coordinate expression of $\Phi$,
$$
\Phi(\boldsymbol y)=\Phi(\boldsymbol x).
$$
Denote $u=\rho(\boldsymbol x)$ and $v=\rho(\boldsymbol y)$. Then
$$
u\cdot \boldsymbol x=\Phi(\boldsymbol x)=\Phi(\boldsymbol y)=v\cdot \boldsymbol y.
$$
Multiplying on the left by $v^{-1}$ yields
$$
\boldsymbol y=v^{-1}u\cdot \boldsymbol x.
$$
Since $v^{-1}u\in G$, it follows that $\boldsymbol y\in G\cdot \boldsymbol x$, i.e.,
$\boldsymbol x$ and $\boldsymbol y$ lie in the same $G$-orbit.
Thus, equality of the tuples $(I_1,\ldots,I_m)$ is equivalent to belonging to the same orbit,
and hence $(I_1,\ldots,I_m)$ forms a complete system of $G$-invariants on $\mathcal M_{\mathrm{reg}}$.
This proves part~(i).

\medskip
$(ii)$
Let $F:\mathcal M_{\mathrm{reg}}\to\mathbb R$ be a $G$-invariant, i.e.,
$$
F(g\cdot \boldsymbol x)=F(\boldsymbol x)
\qquad \forall\, g\in G,\ \forall\, \boldsymbol x\in\mathcal M_{\mathrm{reg}}.
$$
Since $\Phi(\boldsymbol x)=\rho(\boldsymbol x)\cdot \boldsymbol x$ lies on the same orbit as $\boldsymbol x$,
by invariance of $F$ we obtain
$$
F(\boldsymbol x)=F\bigl(\rho(\boldsymbol x)\cdot \boldsymbol x\bigr)=F\bigl(\Phi(\boldsymbol x)\bigr).
$$
Next we use the cross-section assumption: the set $\Phi(\mathcal M_{\mathrm{reg}})\subset \mathcal K$
contains exactly one representative from each $G$-orbit in $\mathcal M_{\mathrm{reg}}$.
Therefore one can define the function
$$
\widetilde F:\Phi(\mathcal M_{\mathrm{reg}})\to\mathbb R,
\qquad
\widetilde F(\boldsymbol z):=F(\boldsymbol z),\quad \boldsymbol z\in\Phi(\mathcal M_{\mathrm{reg}}).
$$
This is well-defined because for $\boldsymbol z\in\Phi(\mathcal M_{\mathrm{reg}})$ the value $F(\boldsymbol z)$
does not depend on the choice of $\boldsymbol x$ such that $\Phi(\boldsymbol x)=\boldsymbol z$:
any two such $\boldsymbol x$ lie in the same orbit (by part (i)), and $F$ is constant on orbits.

Hence, for any $\boldsymbol x\in\mathcal M_{\mathrm{reg}}$ we have
$$
F(\boldsymbol x)=F\bigl(\Phi(\boldsymbol x)\bigr)=\widetilde F\bigl(\Phi(\boldsymbol x)\bigr).
$$
If, in addition, $\Phi(\boldsymbol x)=(I_1(\boldsymbol x),\ldots,I_m(\boldsymbol x))$,
then we obtain the representation
$$
F(\boldsymbol x)=\widetilde F\bigl(I_1(\boldsymbol x),\ldots,I_m(\boldsymbol x)\bigr),
$$
which is what we needed to prove.
\end{proof}

\medskip
The proved theorem formalizes the key fact for our purposes:
{orbit canonization, and hence invariantization of the coordinates, yields a complete invariant};
however, unlike in algebraic invariant theory, these invariants are usually
not polynomials: they are defined via comparison and minimization over the orbit,
and therefore have an algorithmic, non-algebraic nature.

\subsection{Illustration of the method for the symmetric group $S_n$}

In this subsection we illustrate how, for the standard action of the symmetric group
$S_n$ on $\mathbb{R}^n$, the discrete moving frame method yields a complete
system of (non-algebraic) invariants.

Let $S_n$ act on $\mathbb{R}^n$ by permuting coordinates:
$$
\sigma\cdot (x_1,\ldots,x_n)=(x_{\sigma(1)},\ldots,x_{\sigma(n)}),
\qquad \sigma\in S_n.
$$
The orbit of a point $x\in\mathbb{R}^n$ consists of all permutations of its coordinates,
that is, it is the set of all vectors having the same multiset of values
$\{x_1,\ldots,x_n\}$.

Fix a total order on $\mathbb{R}^n$, for example the lexicographic
order. As a cross-section, in the finite sense, choose the chamber
$$
\mathcal K=\{\,y\in\mathbb{R}^n\mid y_1\le y_2\le\cdots\le y_n\,\}.
$$
This means: the canonical representative of an orbit is the vector whose coordinates
are arranged in nondecreasing order. In general position, when all coordinates are distinct,
we have strict inequalities $y_1<\cdots<y_n$, and the corresponding permutation is determined
uniquely.

For $x\in\mathbb{R}^n$ in general position define $\rho(x)\in S_n$ as
the unique permutation that sends $x$ into the cross-section:
$$
\rho(x)\cdot x\in \mathcal K.
$$
Equivariance,
$$
\rho(\sigma\cdot x)=\rho(x)\,\sigma^{-1},
$$
holds automatically, since the rule ``sort the coordinates'' depends
only on the orbit: permuting the input merely renames the coordinates, and $\rho$
compensates for this renaming, returning the result to the same chamber.

The normalized (canonical) point of the orbit is defined by
$$
x^*=\rho(x)\cdot x\in\mathcal K.
$$
In coordinates this is simply the ordered vector of values:
$$
x^*=(x_{(1)},x_{(2)},\ldots,x_{(n)}),
$$
where $x_{(1)}\le x_{(2)}\le\cdots\le x_{(n)}$ is an ordering of the multiset
$\{x_1,\ldots,x_n\}$. Thus, the canonical element of the orbit is its
{canonical representative in the ordering chamber}.

Invariantization of the coordinate functions $x_i$ is given by
$$
I_i(x)=\iota(x_i)(x)=x_i(\rho(x)\cdot x),\qquad i=1,\ldots,n.
$$
Since $\rho(x)\cdot x=x^*$, we obtain
$$
I_i(x)=x_{(i)},
$$
that is, $I_1$ is the minimum coordinate of the canonical representative, $I_n$ is its maximum, and the intermediate $I_k$ are
order statistics (second minimum, third minimum, etc.).
These functions are invariant with respect to $S_n$ by definition of invariantization.

The vector
$$
\Phi(x)=\bigl(I_1(x),\ldots,I_n(x)\bigr)=x^*,
$$
is a complete invariant of the orbit on the general position set and, after fixing
a tie-breaking rule, also in degenerate cases: two vectors $x,y$ lie in
the same $S_n$-orbit if and only if their canonical representatives coincide,
i.e., $\Phi(x)=\Phi(y)$. Equivalently, any $S_n$-invariant in the considered
class of functions is a function of the invariantized coordinates $I_1,\ldots,I_n$.

The canonization described above, which uses sorting in increasing order, is only the simplest choice.
Instead, one may use other rules for selecting a canonical orbit representative,
in particular: sorting in decreasing order, sorting by absolute value, by distance to the mean,
cluster criteria, or any other deterministic rule that depends
only on the orbit. Each such choice produces its own moving frame and its own
invariantized coordinates, but under correct canonization all of them yield
complete systems of invariants in the corresponding class of functions.

Note that the non-algebraic invariants, namely the ordered statistics $x_{(1)}\le \cdots \le x_{(n)}$ obtained by invariantizing the coordinates, are ``stronger'' than the classical algebraic invariants of $S_n$. Indeed, any symmetric polynomial depends only on the multiset of values $\{x_1,\dots,x_n\}$, and hence can be expressed as an ordinary polynomial in the ordered tuple $\bigl(x_{(1)},\dots,x_{(n)}\bigr)$, which is the canonical orbit representative. In particular,
$$
x_1+\cdots+x_n \;=\; x_{(1)}+\cdots+x_{(n)},\qquad
x_1x_2\cdots x_n \;=\; x_{(1)}x_{(2)}\cdots x_{(n)},
$$
and, more generally, the elementary symmetric polynomials can be written as
$$
e_k(x_1,\dots,x_n)\;=\!\!\sum_{1\le i_1<\cdots<i_k\le n}\!\! x_{i_1}\cdots x_{i_k}
\;=\!\!\sum_{1\le i_1<\cdots<i_k\le n}\!\! x_{(i_1)}\cdots x_{(i_k)}.
$$
Therefore, the classical algebra of symmetric polynomials arises as a subalgebra
generated by polynomial combinations of the non-algebraic invariants of the canonical form; in this sense, invariantization of the coordinates provides ``canonical
orbit coordinates'' from which symmetric polynomials are recovered automatically,
see~\cite{Olver2023}.

\section{A complete system of semi-algebraic invariants for $S_n^{(2)}$.}


In this section we construct a complete system of {semi-algebraic} invariants of the action of the
pair-symmetric group $S_n^{(2)}$ on the space of edge coordinates of
undirected weighted graphs. The main idea is to replace the classical
algebraic approach based on polynomial invariants by
orbit canonization for a finite group action. We show that lexicographic
canonization of the edge vector defines a discrete moving frame in the sense of Olver,
and that the invariants obtained by invariantizing the coordinates form a complete
graph invariant. This approach naturally connects invariant theory of
finite groups with the graph canonization problem and gives it a clear
invariant interpretation.

\subsection{Orbit canonization}

Recall that the action of the pair group $S_n^{(2)}$ is realized as a subgroup of coordinate permutations
in the space $\mathbb{R}^{\binom{n}{2}}$. For convenience, relabel the
edges $\{i,j\}$ ($i<j$) in lexicographic order
$$
(1,2)\prec(1,3)\prec\cdots\prec(1,n)\prec(2,3)\prec\cdots\prec(n-1,n),
$$
and introduce ordered variables with a single index
$$
x_1,x_2,\ldots,x_m,\qquad m=\binom{n}{2},
$$
where $x_s$ is the weight of the edge corresponding to the $s$-th pair $(i,j)$ in the above order.
Then a point of the edge-coordinate space, i.e., a weighted undirected graph,
is identified with a vector
$$
\boldsymbol{x}=(x_1,x_2,\ldots,x_m)\in\mathbb{R}^m.
$$

Each vertex permutation $\sigma\in S_n$ induces a permutation of edges, and hence a permutation of edge coordinates; thus there exists an element
$\tau\in S_n^{(2)}\subset S_m$ which permutes the indices $\{1,\dots,m\}$ in accordance with the
relabeling of edges. That is, for each $\tau\in S_n^{(2)}$ we have
$$
(\tau\cdot \boldsymbol{x})_s = x_{\tau^{-1}(s)},\qquad s=1,\ldots,m.
$$
Equivalently, if $s$ corresponds to the edge $\{i,j\}$, then $\tau(s)$ corresponds to the edge
obtained from $\{i,j\}$ under the action of some vertex permutation, after which the pair
is reordered according to the rule $i<j$.

The orbit
$$
O_{\boldsymbol{x}}=\{\tau\cdot \boldsymbol{x}\mid \tau\in S_n^{(2)}\}
$$
consists of all representations of one and the same weighted undirected graph under
different vertex labelings. Hence, invariants of this action are precisely graph invariants
with respect to isomorphism.

\medskip

To construct a discrete moving frame, fix the lexicographic order on
$\mathbb{R}^m$ and set
$$
V(\boldsymbol{x})=(x_1,x_2,\ldots,x_m)\in\mathbb{R}^m.
$$
Then the \emph{canonical orbit representative} is defined by
\begin{equation}\label{can_sn2}
\mathrm{can}(\boldsymbol{x})\;=\;\min\nolimits_{\text{lex}}
\bigl\{\,V(\tau\cdot \boldsymbol{x})\ \bigm|\ \tau\in S_n^{(2)}\,\bigr\},
\end{equation}
that is, the element of the orbit whose edge vector is lexicographically minimal.
In general position, the minimum is attained at a unique orbit element; in
degenerate situations, when the minimum is attained at several elements, one fixes a
deterministic tie-breaking rule that makes the choice unique.

As a result, we obtain the standard definition of graph canonization: we choose a
canonical representation of edge weights
that brings the edge vector to a minimal form. Hence, graph canonization acquires the precise meaning of a
{canonical cross-section of the orbits of the action of $S_n^{(2)}$}.

In the finite case it is convenient to think of the cross-section as the set of all already canonical
objects:
$$
\mathcal{K}=\{\,\boldsymbol{x}\in\mathbb{R}^m\mid 
V(\boldsymbol{x})=\mathrm{can}(\boldsymbol{x})\,\}.
$$
That is, $\mathcal{K}$ contains exactly one representative from each orbit, for a
fixed tie-breaking rule.

\medskip

Define $\rho(\boldsymbol{x})\in S_n^{(2)}$ as the uniquely chosen element
realizing the minimum in \eqref{can_sn2}:
$$
\rho(\boldsymbol{x})=\arg\min_{\tau\in S_n^{(2)}} V(\tau\cdot \boldsymbol{x}),
\qquad\text{that is}\qquad
\rho(\boldsymbol{x})\cdot \boldsymbol{x}\in\mathcal{K}
\ \ \text{and}\ \
V(\rho(\boldsymbol{x})\cdot \boldsymbol{x})=\mathrm{can}(\boldsymbol{x}).
$$
Then $\rho$ is a right discrete moving frame: for any
$\tau\in S_n^{(2)}$ we have
$$
\rho(\tau\cdot \boldsymbol{x})=\rho(\boldsymbol{x})\,\tau^{-1},
$$
since the orbit set is the same and the canonical orbit representative does not depend on the
choice of its element.

\subsection{Invariantization and a complete system of invariants.}
Invariantization of a function is defined by
$$
\iota(F)(\boldsymbol{x}) = F\left(\rho(\boldsymbol{x}) \cdot \boldsymbol{x} \right).
$$
Apply it to the coordinate functions $x_s$:
$$
I_s(\boldsymbol{x}) = \iota(x_s)(\boldsymbol{x}) = \left(\rho(\boldsymbol{x}) \cdot \boldsymbol{x}\right)_s, \qquad s = 1, \ldots, m.
$$
Hence, the vector of invariantized coordinates
$$
\boldsymbol{I}(\boldsymbol{x}) = \left(I_1(\boldsymbol{x}), I_2(\boldsymbol{x}), \ldots, I_m(\boldsymbol{x}) \right) = \rho(\boldsymbol{x}) \cdot \boldsymbol{x},
$$
coincides with the canonical orbit representative, i.e.,
$$
\boldsymbol{I}(\boldsymbol{x}) = \operatorname{can}(\boldsymbol{x}).
$$

According to Theorem~\ref{complete_invariants}, this set of invariants is complete: for any
$\boldsymbol{x}, \boldsymbol{y} \in \mathbb{R}^m$ we have
$$
\boldsymbol{I}(\boldsymbol{x}) = \boldsymbol{I}(\boldsymbol{y}) \quad \Longleftrightarrow \quad \exists \, \tau \in S_n^{(2)} : \ \boldsymbol{y} = \tau \cdot \boldsymbol{x}.
$$

Equivalently, for any $S_n^{(2)}$-invariant $F$ in the fixed class of functions there exists a function $\widetilde{F}$ such that
$$
F(\boldsymbol{x}) = \widetilde{F}\left(I_1(\boldsymbol{x}), I_2(\boldsymbol{x}), \ldots, I_m(\boldsymbol{x}) \right),
$$
since $F$ is constant on the orbit and the invariantized coordinates are the coordinates of the canonical representative of this orbit.

The most important practical consequence is that for an {arbitrary} orbit representative, i.e., for an arbitrary vertex labeling of the graph,
invariantization of the coordinates returns one and the same canonical object:
$$
\mathrm{can}(\boldsymbol{x})=\rho(\boldsymbol{x})\cdot \boldsymbol{x}=\bigl(I_{1}(\boldsymbol{x}),I_{2}(\boldsymbol{x}),\ldots,I_{m}(\boldsymbol{x})\bigr),
$$
and hence invariantization realizes canonization.

\subsection{Semi-algebraic nature of the obtained invariants}

Since the constructed invariants $I_k(\boldsymbol{x})$ are not polynomials, it is important
to clarify their mathematical status. As emphasized by Olver~\cite{Olver2023},
invariants of discrete groups naturally go beyond the classical algebras
of polynomial invariants and belong to broader classes of functions. In our construction,
a natural such class is that of {semi-algebraic} functions.

A subset $A\subset\mathbb{R}^n$ is called {semi-algebraic} if it can be represented
as a finite Boolean combination of sets defined by polynomial
equalities and inequalities.
A function $f\colon A\to\mathbb{R}$ is called {semi-algebraic} if its graph
$$
\Gamma(f)=\{(x,y)\in A\times\mathbb{R}\mid y=f(x)\}\subset\mathbb{R}^{n+1},
$$
is a semi-algebraic set; see \cite{BC}.

Typical examples of semi-algebraic functions include polynomials and rational functions
(on their domains of definition), the absolute value function $|x|=\sqrt{x^2}$, and the functions
$\min(x,y)$ and $\max(x,y)$.

\begin{theorem}\label{semi_alg_I}
The invariant functions $I_k(\boldsymbol{x})$, defined as the $k$-th coordinates of the
canonical representative $\mathrm{can}(\boldsymbol{x})$ of the orbit of the action of $S_n^{(2)}$, defined by the lexicographic minimum with a fixed tie-breaking rule,
are semi-algebraic functions.
\end{theorem}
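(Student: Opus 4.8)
The plan is to exploit two facts: that $S_n^{(2)}$ is finite, and that every $\tau\in S_n^{(2)}$ acts on $\mathbb R^m$ by a coordinate permutation, so that $(\tau\cdot\boldsymbol x)_s=x_{\tau^{-1}(s)}$ is a linear --- hence polynomial, hence semi-algebraic --- function of $\boldsymbol x$. Fix an enumeration $S_n^{(2)}=\{\tau^{(1)},\dots,\tau^{(N)}\}$, $N=n!$. The whole construction of $\mathrm{can}(\boldsymbol x)$ is then assembled from finitely many linear functionals by lexicographic comparisons and Boolean operations, and each of these operations preserves semi-algebraicity; the conclusion follows by collecting these observations.

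First I would record that lexicographic comparison is semi-algebraic. For vectors $u,v\in\mathbb R^m$ whose entries are affine in $\boldsymbol x$, the relation $u\preceq_{\text{lex}} v$ is expressed by the first-order condition
$$
(u=v)\ \vee\ \bigvee_{j=1}^{m}\Bigl[(u_1=v_1)\wedge\cdots\wedge(u_{j-1}=v_{j-1})\wedge(u_j<v_j)\Bigr],
$$
i.e.\ a finite Boolean combination of polynomial equalities and strict inequalities in $\boldsymbol x$. Applying this with $u=V(\tau^{(i)}\cdot\boldsymbol x)$ and $v=V(\tau^{(j)}\cdot\boldsymbol x)$ shows that each set $\{\boldsymbol x: V(\tau^{(i)}\cdot\boldsymbol x)\preceq_{\text{lex}}V(\tau^{(j)}\cdot\boldsymbol x)\}$ is semi-algebraic. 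Intersecting over $j=1,\dots,N$ gives that the locus where $\tau^{(i)}$ attains the lexicographic minimum in \eqref{can_sn2},
$$
A_i=\Bigl\{\boldsymbol x\in\mathbb R^m:\ V(\tau^{(i)}\cdot\boldsymbol x)=\mathrm{can}(\boldsymbol x)\Bigr\}
=\bigcap_{j=1}^{N}\bigl\{\boldsymbol x:\ V(\tau^{(i)}\cdot\boldsymbol x)\preceq_{\text{lex}}V(\tau^{(j)}\cdot\boldsymbol x)\bigr\},
$$
is semi-algebraic.

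Next I would incorporate the tie-break rule. Taking the standard deterministic rule ``among all group elements attaining the minimum, choose the one of least index in the fixed enumeration'', the region on which the moving frame is constantly equal to $\tau^{(i)}$ is $R_i=A_i\setminus\bigcup_{j<i}A_j$, again a Boolean combination of semi-algebraic sets, hence semi-algebraic; and $\{R_1,\dots,R_N\}$ is a finite semi-algebraic partition of $\mathbb R^m$. On $R_i$ we have $\rho(\boldsymbol x)=\tau^{(i)}$ and therefore $I_k(\boldsymbol x)=(\tau^{(i)}\cdot\boldsymbol x)_k=x_{(\tau^{(i)})^{-1}(k)}$, a single coordinate function. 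Finally, a function that is piecewise polynomial (more generally, piecewise semi-algebraic) over a finite semi-algebraic partition is semi-algebraic, because its graph
$$
\Gamma(I_k)=\bigcup_{i=1}^{N}\Bigl\{(\boldsymbol x,y)\in\mathbb R^{m+1}:\ \boldsymbol x\in R_i,\ y=x_{(\tau^{(i)})^{-1}(k)}\Bigr\}
$$
is a finite union of semi-algebraic sets. (Equivalently, one may invoke the Tarski--Seidenberg theorem directly: $\Gamma(I_k)$ is defined by a first-order formula over the ordered field $\mathbb R$, asserting the existence of an index $i$ with $\boldsymbol x\in R_i$ and $y=x_{(\tau^{(i)})^{-1}(k)}$, so it is semi-algebraic.)

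The only genuinely delicate point, and the one I would flag as the main obstacle, is the tie-breaking rule: the argument requires the chosen rule to be itself semi-algebraic (equivalently, first-order definable over $\mathbb R$). This holds for the least-index rule above and, more generally, for any rule obtained by successively refining the lexicographic comparison along a finite list of linear functionals of $\boldsymbol x$; it could fail only for a pathological non-definable choice of representative on the (measure-zero) tie set, which never arises in the algorithmic setting. Granting a semi-algebraic tie-break, the proof is complete.
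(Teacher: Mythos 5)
Your proof is correct and follows essentially the same route as the paper's: both describe the graph of $I_k$ as a finite Boolean combination of linear equalities and strict inequalities obtained by expanding the lexicographic comparisons over the finite group $S_n^{(2)}$. The one point you flag as delicate --- definability of the tie-break --- is in fact immaterial for $I_k$, since even when several $\tau$ attain the minimum they all produce the same vector $\mathrm{can}(\boldsymbol{x})$, so its $k$-th coordinate is well defined without any tie-breaking; the tie-break only affects $\rho(\boldsymbol{x})$, not the invariants themselves.
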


\begin{proof}
By definition of canonization, for each $\boldsymbol{x}\in\mathbb{R}^m$ we fix
a deterministic tie-breaking rule and define
$$
\mathrm{can}(\boldsymbol{x})=\min\nolimits_{\text{lex}}\{\,\tau\cdot \boldsymbol{x}\mid \tau\in S_n^{(2)}\,\},
\qquad
\rho(\boldsymbol{x})=\arg\min_{\tau\in S_n^{(2)}} (\tau\cdot \boldsymbol{x}),
$$
with respect to the lexicographic order on $\mathbb{R}^m$. Then $I_k(\boldsymbol{x})$ is the $k$-th
coordinate of the vector $\mathrm{can}(\boldsymbol{x})$, i.e.,
$$
I_k(\boldsymbol{x})=\bigl(\mathrm{can}(\boldsymbol{x})\bigr)_k.
$$

Consider the graph of the function $I_k$:
$$
\Gamma(I_k)=\{(\boldsymbol{x},y)\in\mathbb{R}^m\times\mathbb{R}\mid y=I_k(\boldsymbol{x})\}.
$$
By the definition of $\mathrm{can}(\boldsymbol{x})$ we have the equivalence
\begin{equation}\label{graph_Ik}
y=I_k(\boldsymbol{x})
\iff
\bigvee_{\tau\in S_n^{(2)}}
\Bigl(
y=(\tau\cdot\boldsymbol{x})_k
\ \wedge\
\bigwedge_{\sigma\in S_n^{(2)}}\bigl(\tau\cdot\boldsymbol{x}\preceq \sigma\cdot\boldsymbol{x}\bigr)
\Bigr),
\end{equation}
where $\preceq$ denotes the lexicographic order on $\mathbb{R}^m$.
Since $S_n^{(2)}$ is finite, we obtain a finite conjunction, and hence the description of $\Gamma(I_k)$ reduces to a finite Boolean combination of linear equalities and inequalities.

Next, the condition $\tau\cdot\boldsymbol{x}\preceq \sigma\cdot\boldsymbol{x}$ expands
into a finite Boolean combination of equalities and strict inequalities between coordinates:
$$
(\tau\cdot\boldsymbol{x})_1<(\sigma\cdot\boldsymbol{x})_1
\ \ \vee\ \
\Bigl((\tau\cdot\boldsymbol{x})_1=(\sigma\cdot\boldsymbol{x})_1\ \wedge\
(\tau\cdot\boldsymbol{x})_2<(\sigma\cdot\boldsymbol{x})_2\Bigr)
\ \ \vee\ \ \cdots,
$$
up to the $m$-th step. Each such equality or inequality is a polynomial (linear)
condition on the variables $\boldsymbol{x}$.

Therefore, condition \eqref{graph_Ik} describes $\Gamma(I_k)$ as a finite Boolean combination
of sets defined by linear equations and inequalities in the variables $(\boldsymbol{x},y)$.
By the definition of semi-algebraic sets, this implies that $\Gamma(I_k)$ is semi-algebraic,
and hence $I_k$ is a semi-algebraic function.
\end{proof}

The obtained result formalizes graph canonization as the problem of constructing
semi-algebraic invariants: a complete invariant is given not by polynomials, as in
classical invariant theory, but by an algorithmic rule selecting an orbit representative of a
finite group. This translates the ``canonical labeling'' problem into the
language of moving frames: $\rho(\boldsymbol{x})$ is a discrete moving frame, and
$\mathrm{can}(\boldsymbol{x})=\rho(\boldsymbol{x})\cdot \boldsymbol{x}$ is a normalization, which is an orbit invariant.
Thus, canonization appears as a natural analogue of invariantization
for finite groups, and the invariantized edge coordinates, according to Theorem~\ref{complete_invariants}, form a complete
system of invariants in the chosen class of functions.

It is worth noting that the moving frame $\rho(\boldsymbol x)$ is uniquely defined only
for graphs with a trivial automorphism group.
If a graph has internal symmetries, then the minimum in the lexicographic
canonization problem may be attained on an entire subset of permutations.
To ensure determinism, one fixes a \emph{tie-breaking} rule:
for example, among all permutations that send $\boldsymbol x$ to the
lexicographically minimal element of its orbit, one chooses the smallest permutation with respect to a prescribed order.
Then $\rho(\boldsymbol x)$ is uniquely defined, and the normalized vector
$$
\iota(\boldsymbol x)=\rho(\boldsymbol x)\cdot \boldsymbol x
$$
is unique and does not depend on the ambiguity of the choice within that subset of permutations.

\subsection{Example for $n=4$}

For $n=4$ we have $m=\binom{4}{2}=6$ edge coordinates of the graph. Fix the edge numbering
in lexicographic order
$$
(12)\prec(13)\prec(14)\prec(23)\prec(24)\prec(34),
$$
and introduce the variables
$$
(x_1,x_2,x_3,x_4,x_5,x_6)=(x_{12},x_{13},x_{14},x_{23},x_{24},x_{34}).
$$
Then we identify a point of the edge-coordinate space (a weighted graph) with the vector
$$
\boldsymbol{x}=(x_1,x_2,x_3,x_4,x_5,x_6)\in\mathbb{R}^6.
$$
The group $S_4^{(2)}\cong S_4$ has $24$ elements, so the orbit $O_{\boldsymbol{x}}$
typically contains $24$ distinct six-dimensional vectors.

We compare vectors lexicographically and define the moving frame as the coordinate permutation
that sends $\boldsymbol{x}$ to the lexicographically minimal element of
its orbit:
$$
\rho(\boldsymbol{x})=\arg\min_{\tau\in S_4^{(2)}} V(\tau\cdot \boldsymbol{x})
=\arg\min_{\tau\in S_4^{(2)}} (\,(\tau\cdot \boldsymbol{x})_1,\ldots,(\tau\cdot \boldsymbol{x})_6\,).
$$
The normalized point
$$
\boldsymbol{x}^*=\rho(\boldsymbol{x})\cdot \boldsymbol{x}
$$
is a canonical representation of the graph (a canonical orbit representative) in the chosen edge order.

A complete system of invariants in this case consists of the six-tuple
$$
I_1(\boldsymbol{x}),\ I_2(\boldsymbol{x}),\ \ldots,\ I_6(\boldsymbol{x}),
$$
where
$$
(I_1(\boldsymbol{x}),\ldots,I_6(\boldsymbol{x}))
=
\bigl(\rho(\boldsymbol{x})\cdot \boldsymbol{x}\bigr)
=
\boldsymbol{x}^*.
$$
By definition of $\rho(\boldsymbol{x})$, this six-dimensional vector is lexicographically
minimal among all $24$ elements of the orbit $O_{\boldsymbol{x}}$, i.e.,
$$
\boldsymbol{x}^* = \mathrm{can}(\boldsymbol{x}).
$$

\medskip
In other words, $I_1(\boldsymbol{x})$ is the first coordinate of the normalized
vector $\boldsymbol{x}^*$, $I_2(\boldsymbol{x})$ is the second one, and so on up to
$I_6(\boldsymbol{x})$, which together completely reconstruct the canonical orbit
representative. By Theorem~\ref{semi_alg_I}, each invariant $I_s(\boldsymbol{x})$ is a {semi-algebraic}
function of the initial coordinates $\boldsymbol{x}$.

\medskip
Note that the choice of canonization is not unique: instead of the lexicographic minimum
one may use other deterministic rules. All such variants yield different moving frames, but, under
the assumption that the canonical representative is chosen uniquely,
each of them produces a complete system of invariants via invariantization of the coordinates.

Although our approach uses exhaustive search for minimization, modern canonical labeling algorithms such as \texttt{nauty} or \texttt{Traces}~\cite{McKayPiperno2014} in fact compute the same discrete moving frame much more efficiently. Instead of global minimization, they use an {individualization--refinement} strategy that progressively reduces the group of acting symmetries. In the terminology of the present work, this can be interpreted as a successive approximation to the canonical orbit cross-section.

\section{Conclusions}

In this work we propose a new interpretation of the graph canonization problem as the problem of constructing {semi-algebraic invariants} of the action of the pair group $S_n^{(2)}$ on the space of edge coordinates. In contrast to the classical approach in algebraic invariant theory, where the main effort is aimed at describing the ring of polynomial invariants, in our approach the central object is the {canonization algorithm}, formalized as a discrete {moving frame} in the sense of Olver.

This viewpoint allows us to connect two previously separated worlds: the purely algorithmic procedure of finding a canonical form and the differential-geometric theory of orbit normalization. We have shown that canonical labeling of a graph is nothing but the choice of a cross-section in the orbit space, and canonization itself is the process of invariantizing coordinates via a moving frame. This elevates the practical canonization procedure to the status of a rigorous mathematical object amenable to classification and structural analysis.

In further research, special attention is planned to be devoted to a systematic description of different canonization rules as distinct classes of discrete frames, which will make it possible to carry out a comparative analysis of their stability and discriminative power with respect to degenerate structures. This approach opens prospects for a deeper study of the relationship between the proposed method and computational complexity theory, where the complexity of graph canonization can be interpreted through the structural complexity of constructing the corresponding moving frame and through the analysis of the geometry of a fundamental domain in the space of edge coordinates.

The main outcome of the work is the establishment of a conceptual framework: {graph canonization is a discrete moving frame, and the coordinates of the canonical representative form a complete system of semi-algebraic invariants of the action of $S_n^{(2)}$}. This creates a bridge between the invariant theory of Lie group actions, algebraic combinatorics, and algorithmic graph theory, proposing a new language for addressing fundamental problems of isomorphism and structural recognition.

\end{document}